\newtheorem{theorem}{Theorem}[section]
\newtheorem{lemma}[theorem]{Lemma}
\newmdtheoremenv[nobreak=true]{notation}{Notation}
\theoremstyle{definition}
\newtheorem{definition}[theorem]{Definition}
\newlength{\Oldarrayrulewidth}
\newcommand{\cjk}[1]{\chi_j^k(#1)}
\newcommand{\mcijk}[1]{\lambda_j^k(#1)}
\newcommand{\cnek}[1]{\chi_e^k(#1)}
\newcommand{\ciek}[1]{\lambda_e^k(#1)}
\newcommand{\cnvk}[1]{\chi_v^k(#1)}
\newcommand{\civk}[1]{\lambda_v^k(#1)}
\newcommand{\cn}[1]{\rchi(#1)}   
\newcommand{\ci}[1]{\lambda(#1)}   
\newcommand{\Mod}[1]{\ (\mathrm{mod}\ #1)}
\DeclareRobustCommand{\rchi}{{\mathpalette\irchi\relax}}
\newcommand{\irchi}[2]{\raisebox{\depth}{$#1\chi$}} 
\DeclarePairedDelimiter\ceil{\lceil}{\rceil}
\DeclarePairedDelimiter\floor{\lfloor}{\rfloor}
\title{Graph Coloring as a Measure of Network Vulnerability}
\author{
Grace Mulry\thanks{University of Texas, gracemulry@utexas.edu}
\hspace*{.5cm}
Mia DesStefano\thanks{Vassar College, mdestefano@vassar.edu}
\hspace*{.5cm}
Mason Nakamura\thanks{Marist College, mason.nakamura1@marist.edu}
\hspace*{.5cm}
Rodrigo Rios\thanks{Stony Brook University, rodrigoreyrios@gmail.com}
\hspace*{.5cm}
Nathan Shank \thanks{Moravian University, shankn@moravian.edu}
\hspace*{.5cm}

}
\date{}
\begin{document}

\maketitle

\begin{abstract}
We consider new parameters for conditional network vulnerability related to graph coloring. We define a network to be in operation if the chromatic number (or index) is greater than some specified value $k$.  The parameters of interest, the \textit{minimum $k$-chromatic number} and the \textit{minimum $k$-chromatic index} consider the least number of failures in the network which could render the network inoperable. In this paper, we consider edge failure, vertex failures, as well as mixed failures for paths, cycles, and complete graphs.  
\end{abstract}


\section{Introduction}
    Networks, modeled as graphs, can represent many things, including social networks, computational networks, electrical networks, neural networks, or transportation networks. The underlying structure of the network is vital to is operation. Network vulnerability parameters focus on investigating the stability of networks given a particular parameter of interest, which signifies if the network is operational or in a failure state. Network vulnerability can be thought of as the study of how much damage (e.g. number of removed vertices or edges) to a network must be done for it to enter a failure state. 

    Initially, in the 1970's and 80's, network vulnerability focused on connectivity. More recently, other parameters have been of interest. Harary \cite{Harary1983} generalized the idea for other models by considering a property $P$ and a network $G$ to be operational if there is a component of $G$ which contains property $P$.  Thus, a network is in a failure state if no component has property $P$. Some examples of parameters that have been studied include diameter \cite{shank_k-diameter_2018}, domination \cite{DominatingReliability}, and many variations on component order (see \cite{Gross2006}, \cite{gross_survey_2013}, and \cite{Gross2015} for example).  

    Through this paper we will assume that $G$ is a simple graph.  We will consider the chromatic number of $G$, denoted $\cn{G}$, as well as the chromatic index of $G$, denoted $\ci{G}$, to define the failure states. Recall that the chromatic number of a graph $G$ is the minimum number of colors needed to color the vertices of a graph so that no two adjacent vertices are the same color. The chromatic index of a graph is the minimum number of colors needed to color the edges of a graph such that no vertex is incident to two edges of the same color.  For other graph theory notation and terminology, we will follow \cite{West2000}.

    There are many applications of graph coloring, more specifically chromatic number and chromatic index. Chromatic number can assist in scheduling, organizing radio stations, traffic signaling, puzzle solver, and register allocation. Chromatic index has many applications in scheduling and communication networks (see \cite{Guichard} and \cite{Chaitin} for example).   

    However, investigating the vulnerability of networks based on these parameters also has practical applications, as it lets us know, quantitatively, the amount that a graph might be disturbed when variables change. For example, the study of network vulnerability can help inform how to construct the most reliable schedules possible for an organization in terms of the parameter; so, if one person is removed, the schedule still holds.



    In Chapter \ref{ChromaticsNumber}, we will focus on chromatic number, whereas Chapter \ref{ChromIndex} will focus on the chromatic index.  For each of these chapters, we will consider edge removal, followed by vertex removal, and then mixed removal where vertices are removed first and then edges. We will demonstrate the parameters with results for paths, cycles, and complete graphs. In chapter \ref{Conclusion}, we will briefly discuss some extensions as well as other interesting graph classes to consider. 

\section{Chromatic Number}\label{ChromaticsNumber}

    In this chapter, we will consider the chromatic number as the parameter of interest.  Section \ref{ChromNumberEdgeRemoval} will consider edge removal, Section \ref{ChromNumberVertexRemoval} will consider vertex removal, and Section \ref{ChromNumberMixedRemoval} will consider mixed removal.

    \subsection{Edge Removal}\label{ChromNumberEdgeRemoval}
        
        The goal of this parameter is to find the minimum number of edges that can be removed so that the remaining graph has a chromatic number less than $k$, for some positive integer $k$. To do this, we will first define a \textit{$k$-chromatic number edge removal set} as a set of edges, so that when removed, the resulting graph has a chromatic number less than or equal to $k$. Since we are trying to do this as efficiently as possible, the parameter of interest will be the size of the smallest $k$-chromatic number edge removal set. This is made more formal in the following definitions: 
        
        \begin{definition}
            Given a positive integer $k$ and a graph $G = (V,E)$, we say a set of edges $E' \subseteq E$ is a \textit{$k$-chromatic number edge removal set} if $\cn{G-E'} \leq k$.  
        \end{definition}
    
        \begin{definition}
            Let $\mathcal{E}$ be the set of all $k$-chromatic number edge removal sets.  Define the \textit{$k$-chromatic number edge removal parameter} as $$\cnek{G} = \min\{|E'|: E' \in \mathcal{E}\}.$$ 
        \end{definition}
    
        Any set $E'' \in \mathcal{E}$ where $|E''| = \cnek{G}$ will be called a \textit{minimum $k$-chromatic number edge removal set}. 
        
        For any graph $H$, if  $\cn{H}\leq k$, then $H$ is in a \textit{failure state.} If $\cn{H}>k$, then $H$ is not in a failure state and our network remains in an \textit{operable state}.  Thus, $\cnek{H} = 0$ if and only if $H$ is in a failure state.  
    
        If $\cn{G}=1$, then $|E(G)| = 0$.  Otherwise, any edge will require at least two , one for each of the vertices incident to that edge.  Thus we have the following simple lemma for the case when $k=1$. 
        
        \begin{lemma}\label{cne(k=1)}
          For any graph $G = (V,E)$, if $k = 1$, $\cnek{G} = |E|$.
        \end{lemma}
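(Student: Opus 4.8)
The plan is to reduce the claim to the single observation that a graph has chromatic number at most $1$ precisely when it has no edges. First I would record the standard fact that $\cn{H} \le 1$ if and only if $E(H) = \emptyset$: a proper coloring using only one color forces every pair of adjacent vertices to receive the same color, which is impossible whenever an edge is present, while conversely an edgeless graph on a nonempty vertex set is properly colored by assigning every vertex that one color. This is exactly the content flagged in the paragraph preceding the statement, where it is noted that any edge already requires two colors, one for each of its endpoints.

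Next I would apply this characterization to $H = G - E'$ for an arbitrary candidate $1$-chromatic number edge removal set $E' \subseteq E$. Since deleting edges leaves the vertex set unchanged, $G - E'$ has edge set $E \setminus E'$, so the defining condition $\cn{G - E'} \le 1$ is equivalent to $E \setminus E' = \emptyset$, that is, $E \subseteq E'$. Combined with the containment $E' \subseteq E$ required of any removal set, this forces $E' = E$. Hence $E$ is the unique $1$-chromatic number edge removal set, i.e. $\mathcal{E} = \{E\}$.

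Finally, because the family $\mathcal{E}$ of admissible removal sets is the singleton $\{E\}$, the minimum in the definition of $\cnek{G}$ is attained only at $E$, yielding $\cnek{G} = |E|$. I do not expect any genuine obstacle here; the only point warranting a moment's care is the boundary behavior between chromatic numbers $0$ and $1$ in the degenerate case $V = \emptyset$, but there $|E| = 0$ and $\cnek{G} = 0$ as well, so the identity continues to hold.
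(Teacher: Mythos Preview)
Your argument is correct and follows the same idea the paper sketches in the sentence preceding the lemma: a graph is $1$-colorable exactly when it has no edges, so the only admissible removal set is $E$ itself. Your write-up is simply a more careful elaboration of that one-line observation.
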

        
        We will now consider the path graphs, $P_n$, as well as cycle graphs, $C_n$.
        
        \begin{theorem}\label{cnekPath}
            If $P_n$ is a path graph on $n$ vertices then 
            $$\cnek{P_n}=\begin{cases}
                  0 &\text{ if } k>1,\\
                  n-1 &\text{ if } k=1.
            \end{cases}$$
        \end{theorem}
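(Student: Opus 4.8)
The plan is to split along the two cases in the statement, each of which follows from an elementary observation already available, so that almost no new machinery is required.

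The case $k=1$ requires no fresh work. A path $P_n$ has exactly $|E| = n-1$ edges, so Lemma~\ref{cne(k=1)} applies verbatim and yields $\cnek{P_n} = |E| = n-1$. I would simply cite that lemma and record the edge count $|E(P_n)| = n-1$.

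For the case $k>1$, the key fact is that every path is bipartite and hence $2$-colorable; concretely $\cn{P_n} \leq 2$ for all $n \geq 1$, with equality when $n \geq 2$ and $\cn{P_1}=1$. Consequently, as soon as $k \geq 2$ we have $\cn{P_n} \leq 2 \leq k$, so $P_n$ is already in a failure state before any edge is removed. Invoking the observation recorded immediately after the definition---that $\cnek{H}=0$ if and only if $H$ is in a failure state---I would conclude that the empty set is a $k$-chromatic number edge removal set of minimum size, whence $\cnek{P_n}=0$.

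There is no genuine obstacle here: the argument is a direct consequence of the bipartiteness of paths together with Lemma~\ref{cne(k=1)}, and the main task is merely to organize these two observations cleanly. The only point deserving a moment's care is the degenerate graph $P_1$, which has no edges and satisfies $\cn{P_1}=1$: for $k=1$ the formula gives $n-1=0$ and indeed $\cnek{P_1}=0$, while for $k>1$ it gives $0$, so both branches remain correct at this boundary case.
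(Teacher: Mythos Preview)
Your proposal is correct and follows essentially the same approach as the paper: invoke Lemma~\ref{cne(k=1)} for $k=1$, and for $k>1$ observe that $\cn{P_n}\leq 2$ so $P_n$ is already in a failure state. Your added remarks on bipartiteness and the degenerate case $P_1$ are fine but not needed for the argument.
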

        \begin{proof}
            Notice that $\cn{P_n}=2$. Therefore, if $k>1$, $P_n$ is already in a failure state. The other case follows from Lemma \ref{cne(k=1)}.
        \end{proof}
    
        \begin{theorem}\label{cnekCycles}
            If $C_n$ is a cycle graph on $n$ vertices then 
            $$\cnek{C_n} = \begin{cases} 
                n &\text{ if } k=1,\\
                n\pmod{2} &\text{ if }  k=2, \\ 
                0 &\text{ if }  k>2. 
            \end{cases}$$
        \end{theorem}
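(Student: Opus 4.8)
The plan is to split on the three ranges of $k$ and to lean on two standard facts: that $\cn{C_n}=2$ when $n$ is even and $\cn{C_n}=3$ when $n$ is odd, and that deleting a single edge from $C_n$ yields the path $P_n$ on the same $n$ vertices, which satisfies $\cn{P_n}=2$. With these in hand, each case reduces to checking when $C_n$ is already in a failure state and, when it is not, exhibiting a minimum edge removal set.

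First, for $k=1$ I would simply invoke Lemma \ref{cne(k=1)}: a cycle on $n$ vertices has exactly $n$ edges, so $\cnek{C_n}=|E(C_n)|=n$. Next, for $k>2$ I would observe that $\cn{C_n}\le 3\le k$ for every $n$, so $C_n$ already satisfies $\cn{C_n}\le k$ and is in a failure state; hence the empty set is a valid $k$-chromatic number edge removal set and $\cnek{C_n}=0$. These two cases are immediate once the chromatic number of a cycle is recorded.

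The only case demanding an argument is $k=2$, where I would separate on the parity of $n$. If $n$ is even, then $\cn{C_n}=2\le 2$, so $C_n$ is already in a failure state and $\cnek{C_n}=0=n\pmod 2$. If $n$ is odd, then $\cn{C_n}=3>2$, so $C_n$ is \emph{not} in a failure state and at least one edge must be removed, giving the lower bound $\cnek{C_n}\ge 1$. For the matching upper bound, removing any single edge produces $P_n$, and since $\cn{P_n}=2\le 2$ (as used in Theorem \ref{cnekPath}), one deletion suffices, so $\cnek{C_n}=1=n\pmod 2$.

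The main (indeed essentially the only) obstacle is the necessity direction in the $k=2$, $n$ odd subcase — arguing that no edge can be spared — but this follows immediately from $\cn{C_n}=3>2$ once the chromatic number of an odd cycle is recalled. Everything else is bookkeeping to confirm that the computed value agrees with the parity expression $n\pmod 2$.
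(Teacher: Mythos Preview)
Your proposal is correct and follows essentially the same approach as the paper: both dispose of $k=1$ via Lemma~\ref{cne(k=1)}, of $k>2$ via $\cn{C_n}\le 3$, and handle $k=2$ by splitting on parity and invoking $C_n-\{e\}=P_n$ together with Theorem~\ref{cnekPath}. The only cosmetic difference is that you explicitly verify $0=n\pmod 2$ and $1=n\pmod 2$ in the two parity subcases, which the paper leaves implicit.
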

        \begin{proof}
            Consider the cases where $k=2$ and $k>2$, as in the case where $k=1$ is Lemma \ref{cne(k=1)}.
            \begin{enumerate}
                \item Case $k=2$:
                    In $n$ is an even integers, it is known that $\cn{C_n}=2$ and thus $\cnek{C_n}=0$. If $n$ is an odd integer, then $\cn{C_n}=3$, and thus at least one edge removal is necessary. No additional edges need to be removed since $C_n - \{e\} = P_n$ which is already in a failure state according to Theorem \ref{cnekPath}.
                \item Case $k>2$:
                    Note that for all positive integers $n$, $\cn{C_n} \leq 3$ and so when $k>2$ (or $k\geq 3$) $C_n$ is already in a failure state.
            \end{enumerate}
        \end{proof}
    
        Now, we turn our attention to complete graphs, $K_n$.  Recall that $\cn{K_n} = n$.  If $k>n$, then $K_n$ is already in a failure state and $\cnek{K_n} = 0$. Therefore, we will only consider the case where $k\leq n$. 
        
        To proceed, we will need to use the well-known Turán graph.
        
        \begin{definition}
            For positive integers $n\geq k$, the Turán graph, denoted $T(n,k)=(V,E)$, as a complete multipartite graph with $k$ parts, $V_1, V_2, \ldots, V _k$, which partition $V$ such that $-1 \leq |V_i| - |V_j| \leq 1$ for all $1 \leq i \leq j \leq k$. 
        \end{definition}
        This is to say that $T(n,k)$ is a complete multipartite graph containing  $k$ parts of nearly equal size.  Note that $T(n,k)$ does not have a $K_{k+1}$ as a subgraph since there are only $k$ parts. We will denote the number of edges in the Turán graph by $|(T(n,k)|$.  Notice that if $p$ and $q$ are non-negative integers so that  $n=pk+q$ and $0 \leq q < k$ then $$|T(n,k)| = \binom{n}{2} - \left(q \binom{p+1}{2} + (k-q) \binom{p}{2}\right).$$
        
        \begin{theorem}\label{kChromaticNumberEdgeRemovalCompleteGraphs}
            Let $K_n$ be a complete graph on $n$ vertices.  For any positive integers $k \leq n$, find non-negative integers $p$ and $q$ so that $n=pk+q$ and $0 \leq q < k$.  Then,
            
            \[\cnek{K_n} = q \binom{p+1}{2} + (k-q) \binom{p}{2}.\]
        \end{theorem}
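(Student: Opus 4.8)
The plan is to prove matching upper and lower bounds, linking each $k$-coloring of $K_n-E'$ to a partition of the vertex set. The key observation is that every pair of vertices in $K_n$ is adjacent, so a proper $k$-coloring of $K_n - E'$ can assign the same color to two vertices only if the edge joining them lies in $E'$. Hence if such a coloring has color classes of sizes $n_1,\dots,n_k$ with $\sum_i n_i = n$, then $E'$ must contain every edge internal to a color class, giving $|E'| \ge \sum_{i=1}^k \binom{n_i}{2}$.

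For the upper bound I would take the Turán partition of $V(K_n)$ into $k$ parts of sizes differing by at most one ($q$ parts of size $p+1$ and $k-q$ parts of size $p$) and let $E'$ consist of all edges internal to these parts. Then $K_n - E'$ is exactly the complete $k$-partite graph $T(n,k)$, whose chromatic number is $k$ (each part receives a single color), so $E'$ is a valid $k$-chromatic number edge removal set. Using the stated edge count for the Turán graph, $|E'| = \binom{n}{2} - |T(n,k)| = q\binom{p+1}{2} + (k-q)\binom{p}{2}$, which bounds $\cnek{K_n}$ from above.

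For the lower bound, by the observation above it suffices to show
$$\sum_{i=1}^k \binom{n_i}{2} \ge q\binom{p+1}{2} + (k-q)\binom{p}{2}$$
for all nonnegative integers $n_1,\dots,n_k$ summing to $n$, with equality attained at the balanced partition. I would establish this by a smoothing argument exploiting the convexity of $\binom{x}{2} = \tfrac{1}{2}x(x-1)$: whenever two parts satisfy $n_i \ge n_j + 2$, replacing them by $n_i - 1$ and $n_j + 1$ changes the objective by $-(n_i - n_j - 1) < 0$, so it strictly decreases. Iterating such balancing moves can only lower the sum and must terminate at the Turán configuration, which is therefore the unique minimizer.

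I expect the minimization to be the only delicate point; the single-exchange computation above makes it routine, since it shows the objective strictly decreases under every balancing move and so is minimized precisely at the balanced partition. Combining the matching upper and lower bounds yields $\cnek{K_n} = q\binom{p+1}{2} + (k-q)\binom{p}{2}$, as claimed.
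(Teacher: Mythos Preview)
Your argument is correct. The upper bound matches the paper's exactly: remove the edges internal to the balanced $k$-partition so that $K_n - E' = T(n,k)$, which is $k$-colorable. Your lower bound is also valid: since every pair of vertices of $K_n$ is adjacent, any proper $k$-coloring of $K_n - E'$ forces $E'$ to contain all intra-class edges, and the convexity/smoothing step correctly shows $\sum_i \binom{n_i}{2}$ is minimized at the balanced partition.

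The paper takes a different route for the lower bound: it invokes Tur\'an's Theorem as a black box, arguing that any graph on $n$ vertices with more than $|E(T(n,k))|$ edges contains a $K_{k+1}$ and hence has chromatic number at least $k+1$; so removing fewer than $\binom{n}{2} - |E(T(n,k))|$ edges cannot reach a failure state. Your approach is more elementary and self-contained, effectively reproving (via the coloring--partition correspondence and convexity) the specific consequence of Tur\'an's Theorem that is actually needed here, namely that a $k$-chromatic graph on $n$ vertices has at most $|E(T(n,k))|$ edges. The paper's citation is shorter, but your argument avoids appealing to the full strength of Tur\'an's Theorem (the existence of a $K_{k+1}$ clique), which is more than is required.
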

        \begin{proof}
        
            By Turán's Theorem (see \cite{West2000} pg 208), if $G$ is any graph of order $n$ with $|V(G)| > |T(n,k)|$, then $\cn{G} \geq k+1$ since $G$ must contains a $K_{k+1}$ as an induced subgraph. Since $\cn{T(n,k)} = k$, we can conclude that 
            \begin{align*}
                \cnek{K_n} &= |E(K_n)| - |T(n,k)|\\
                &=q \binom{p+1}{2} + (k-q) \binom{p}{2}.
            \end{align*}
        \end{proof}

    \subsection{Vertex Removal}\label{ChromNumberVertexRemoval}
    
        We will now turn our attention to the case of vertex removal. We proceed as before by considering all \textit{$k$-chromatic number vertex removal sets} and then finding the size of the minimum of these sets. 
        
        \begin{definition}
            Given a positive integer $k$ and a graph $G = (V,E)$, we say a set of vertices $V' \subseteq V$ is a \textit{$k$-chromatic number vertex removal set} if $\cn{G-V'} \leq k$.  
        \end{definition}
        
        \begin{definition}
            Let $\mathcal{V}$ be the set of all $k$-chromatic number vertex removal sets.  Define \textit{$k$-chromatic number vertex removal parameter} as $$\cnvk{G} = \min\{|V'|: V' \in \mathcal{V}\}.$$ 
        \end{definition}
            Any set $V'' \in \mathcal{V}$ where $|V''| = \cnvk{G}$ will be called a \textit{minimum $k$-chromatic number vertex removal set}. As before, any graph $H$ with $\cn{H}\leq k$ is in a \textit{failure state.}
        
        Below are the results for chromatic number with vertex removal for paths, cycles, and complete graphs. 
        
        \begin{theorem}\label{cnvkpaths}
            If $P_n$ is a path graph on $n$ vertices then
            $$\cnvk{P_n} =
                \begin{cases}
                    0 & \text{if $k\geq 2$},\\
                    \floor[\big]{\frac{n}{2}} & \text{if $k = 1$}.
                \end{cases}
            $$
        \end{theorem}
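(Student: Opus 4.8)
The plan is to treat the two cases separately. For $k \ge 2$ there is essentially nothing to do: since $\cn{P_n} = 2$ for all $n \ge 2$ (and $\cn{P_1}=1$), the graph $P_n$ already satisfies $\cn{P_n} \le k$, so the empty set is a $k$-chromatic number vertex removal set and hence $\cnvk{P_n}=0$. All of the content lies in the case $k=1$.

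For $k=1$, the first step is to reinterpret the condition. A graph $H$ satisfies $\cn{H} \le 1$ exactly when $H$ has no edges, so a set $V'$ is a $k$-chromatic number vertex removal set (with $k=1$) if and only if $P_n - V'$ is edgeless; equivalently, $V'$ must contain at least one endpoint of every edge of $P_n$. Thus in this case $\cnvk{P_n}$ equals the size $\tau(P_n)$ of a minimum vertex cover of the path, and the goal reduces to showing $\tau(P_n)=\floor{\frac{n}{2}}$.

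I would then establish matching bounds. For the upper bound, label the vertices $v_1,\dots,v_n$ in path order and delete the even-indexed vertices $v_2,v_4,\dots$; this deletes $\floor{\frac{n}{2}}$ vertices and leaves the independent set $\{v_1,v_3,v_5,\dots\}$, so no edge survives and $\cnvk{P_n}\le \floor{\frac{n}{2}}$. For the lower bound, partition the path into the $\floor{\frac{n}{2}}$ pairwise-disjoint edges $v_1v_2,\, v_3v_4,\,\dots$; since any vertex cover must include an endpoint of each of these disjoint edges, we obtain $|V'|\ge \floor{\frac{n}{2}}$, i.e.\ $\cnvk{P_n}\ge \floor{\frac{n}{2}}$. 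Dually, the surviving vertices must form an independent set, and $\alpha(P_n)=\ceil{\frac{n}{2}}$, again forcing at least $n-\ceil{\frac{n}{2}}=\floor{\frac{n}{2}}$ removals.

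The only point needing care is the parity bookkeeping. When $n$ is odd, the pairing $v_1v_2,\, v_3v_4,\,\dots$ leaves the last vertex $v_n$ unpaired; this vertex contributes no edge to the disjoint family and is exactly the vertex safely kept by the deletion in the upper bound, and one checks that the two expressions agree with $\floor{\frac{n}{2}}$ on both the upper and lower sides for each parity. This is routine, so I expect no genuine obstacle: the one small insight is the translation ``$\cn{\cdot}\le 1 \iff$ edgeless $\iff$ the removed set is a vertex cover,'' after which the (standard) vertex cover number of the path does all of the work.
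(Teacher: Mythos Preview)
Your proof is correct. The case $k\ge 2$ and the upper bound for $k=1$ match the paper essentially verbatim. The only real difference is in the lower bound for $k=1$: you recast the problem as computing the vertex cover number $\tau(P_n)$ and then use the matching $\{v_1v_2,\,v_3v_4,\,\dots\}$ of size $\floor{n/2}$ to force $\tau(P_n)\ge\floor{n/2}$, whereas the paper argues by edge counting: deleting $\floor{n/2}-1$ vertices can destroy at most $2\bigl(\floor{n/2}-1\bigr)$ edges, leaving at least $n-1-2\bigl(\floor{n/2}-1\bigr)\ge 1$ edge and hence chromatic number at least $2$. Your matching argument is slightly cleaner and makes the connection to the standard identity $\tau(P_n)=\nu(P_n)=\floor{n/2}$ explicit; the paper's counting argument is more self-contained and avoids naming auxiliary invariants. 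Either way the content is the same.
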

        \begin{proof}
            Consider the cases when $k\geq 2$ and $k=1$. 
            \begin{enumerate}
                \item Case $k\geq 2$:
                    Note that $\cn{P_n} \leq 2$.  Thus, $P_n$ is already in a failure state if $k\geq 2$.
                \item Case $k=1$: 
                    When $k=1$, removing every other vertex on the path will produce a graph which is one-colorable and thus in a failure state, so $\cnvk{P_n} \leq \floor[\big]{\frac{n}{2}}$.
                    Assume there is a $k$-chromatic number vertex removal set $V'$ with $|V'| = \floor[\big]{\frac{n}{2}} -1$. Each vertex removal inherently removes at most 2 edges.  So $P_n - V'$ would have $n-1 - 2\left(\floor[\big]{\frac{n}{2}} -1\right)\geq 1$ edges remaining and therefore has $\cn{P_n-V'}>1$. This implies that $\cnvk{P_n} \geq \floor[\big]{\frac{n}{2}}.$
            \end{enumerate}
        \end{proof}
        
        \begin{theorem}\label{cnvkcycles}
            If $C_n$ is a path graph on $n$ vertices then
            $$\cnvk{C_n} =
                \begin{cases}
                    0 & \text{if $k \geq 3$},\\
                    \left(n \Mod{2}\right) + (2-k)\lfloor \frac{n}{2}\rfloor &\text{if $k<3$}.\\
                \end{cases}
            $$
        \end{theorem}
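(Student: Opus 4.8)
The plan is to treat the three regimes $k \geq 3$, $k = 2$, and $k = 1$ separately, in each case comparing the answer against the stated closed form. The one external fact I would rely on throughout is the classical computation of the chromatic number of a cycle: $\cn{C_n} = 2$ when $n$ is even and $\cn{C_n} = 3$ when $n$ is odd, so in particular $\cn{C_n} \leq 3$ for every $n$. The case $k \geq 3$ is then immediate: since $\cn{C_n} \leq 3 \leq k$, the cycle already satisfies $\cn{C_n} \leq k$, so it sits in a failure state and $\cnvk{C_n} = 0$, matching the first branch.

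For $k = 2$ I would split on the parity of $n$. If $n$ is even, then $\cn{C_n} = 2 \leq 2$, no vertex need be removed, and $\cnvk{C_n} = 0 = \left(n \Mod{2}\right)$. If $n$ is odd, then $\cn{C_n} = 3 > 2$, so at least one vertex must be deleted; deleting any single vertex turns $C_n$ into the path $P_{n-1}$, which satisfies $\cn{P_{n-1}} = 2$, so a single deletion suffices and $\cnvk{C_n} = 1 = \left(n \Mod{2}\right)$. In both subcases the term $(2-k)\floor{n/2} = 0$ vanishes, confirming the formula at $k=2$.

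The substantive case is $k = 1$. Here I would first observe that requiring $\cn{C_n - V'} \leq 1$ is equivalent to requiring that $C_n - V'$ be edgeless, i.e.\ that $V'$ be a vertex cover of $C_n$; hence the parameter equals the minimum vertex cover number of $C_n$. For the upper bound I would exhibit an explicit cover of size $\ceil{n/2}$ by deleting alternate vertices around the cycle, which leaves a nonempty independent (hence edgeless) set. For the lower bound I would use a degree/edge-counting argument: every vertex of $C_n$ has degree $2$, so an edge of $C_n$ survives in $C_n - V'$ unless one of its two endpoints lies in $V'$; consequently deleting $|V'|$ vertices can destroy at most $2|V'|$ of the $n$ edges, and to destroy all of them we need $2|V'| \geq n$, i.e.\ $|V'| \geq \ceil{n/2}$. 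Combining the bounds gives $\cnvk{C_n} = \ceil{n/2}$, and the closing step is the elementary identity $\ceil{n/2} = \left(n \Mod{2}\right) + \floor{n/2} = \left(n \Mod{2}\right) + (2-1)\floor{n/2}$, matching the second branch at $k = 1$.

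The main obstacle is the lower bound in the $k=1$ case; everything else follows directly once $\cn{C_n}$ is known. The edge-counting bound is the cleanest route, but one must be careful to apply ``deleting a vertex removes at most two edges'' to the \emph{induced} subgraph, where an edge is destroyed as soon as either endpoint is deleted --- this is precisely what validates the reduction to a minimum vertex cover and yields the sharp constant $\ceil{n/2}$.
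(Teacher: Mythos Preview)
Your argument is correct. The only real difference from the paper is in the $k=1$ case: the paper observes that any optimal removal set must delete at least one vertex, and deleting a single vertex leaves $P_{n-1}$, so $\cnvk{C_n} = 1 + \cnvk{P_{n-1}}$ and the value follows from Theorem~\ref{cnvkpaths}. You instead recast $\cnvk{C_n}$ for $k=1$ as the minimum vertex cover number of $C_n$ and bound it directly by the edge-count inequality $2|V'| \geq n$. Your route is self-contained and makes the combinatorial content (vertex cover of a cycle) explicit; the paper's route is shorter once the path result is in hand and illustrates the recurring ``delete one vertex, reduce to $P_{n-1}$'' trick used elsewhere in the paper. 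For $k\geq 3$ and $k=2$ the two proofs are essentially identical.
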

        \begin{proof}
            Consider each case:
            \begin{enumerate}
                \item Case $k \geq 3$: 
                    Note that $\cn{C_n} \leq 3$.  Thus, $C_n$ is already in a failure state if $k \geq 3$.
                \item Case $k=2$ and $n$ is even: Note that $\cn{C_n} = 2$ and therefore is already in a failure state.  
                \item Case $k=1$ or $k=2$ and $n$ is odd:
                    The first vertex we remove will result in $P_{n-1}$ for which we can follow Theorem \ref{cnvkpaths}.  
            \end{enumerate}
            Note that the second two cases both produce $\cnvk{C_n} = \left(n \Mod{2}\right) + (2-k)\lfloor \frac{n}{2}\rfloor.$
        \end{proof}
        
        \begin{theorem}
            If $K_n$ is a complete graph on $n$ vertices then 
            $$ \cnvk{K_n} = 
                \begin{cases}
                    0 & \text{ if $k \geq n$},\\
                    n-k & \text{ if $k < n$}.
                \end{cases}
            $$
        \end{theorem}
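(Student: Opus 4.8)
The plan is to split into the two cases $k \geq n$ and $k < n$ exactly as in the statement, and to exploit one structural fact about complete graphs: since $K_n$ has no pair of non-adjacent vertices, deleting any set of $m$ vertices leaves the complete graph on the surviving $n-m$ vertices, i.e. $K_n - V' = K_{n-m}$ whenever $|V'| = m$. Consequently the chromatic number after deletion depends only on \emph{how many} vertices are removed, not on which ones, and equals $n - m$. This single observation drives both cases.

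For the case $k \geq n$, I would simply note that $\cn{K_n} = n \leq k$, so $K_n$ already satisfies the failure condition with no deletions at all. Thus the empty set is a $k$-chromatic number vertex removal set, giving $\cnvk{K_n} = 0$.

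For the case $k < n$, I would apply the structural fact to obtain the exact formula $\cn{K_n - V'} = n - |V'|$ for every $V' \subseteq V$. The condition $\cn{K_n - V'} \leq k$ is then equivalent to $|V'| \geq n - k$. This immediately yields both bounds: removing any $n-k$ vertices produces $K_k$ with $\cn{K_k} = k \leq k$, so $\cnvk{K_n} \leq n-k$; and any $V'$ with $|V'| < n-k$ leaves $\cn{K_n - V'} = n - |V'| > k$, so no smaller set can be a valid removal set, giving $\cnvk{K_n} \geq n-k$. Combining the two inequalities yields equality.

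I do not expect a genuine obstacle here: the whole argument reduces to the fact that deleting vertices from a complete graph again produces a complete graph whose order is the number of surviving vertices, together with $\cn{K_m} = m$. The only point requiring a little care is to record \emph{both} directions — the existence of a removal set of size $n-k$ and the impossibility of any smaller one — but both fall out of the exact identity $\cn{K_n - V'} = n - |V'|$.
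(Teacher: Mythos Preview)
Your proposal is correct and follows essentially the same approach as the paper: both arguments rest on the observations that deleting vertices from $K_n$ yields a smaller complete graph and that $\cn{K_m}=m$. Your version is simply more explicit about recording both the upper and lower bounds in the case $k<n$, which the paper compresses into ``it is clear that.''
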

        \begin{proof}
            If $k \geq n$, then $K_n$ is already in a failure state.  Note that for any $v\in V(K_n)$, $K_n-\{v\}=K_{n-1}$. From this, along with the fact that $\cn{K_i}=i$, it is clear that $\cnvk{K_n} = n-k$ for $k <n$. 
        \end{proof}
    
    \subsection{Mixed Removal}\label{ChromNumberMixedRemoval}
        In many applications, the vertices and edges are prone to failure. Therefore, Beineke and Harary \cite{HararyBeineke} introduced the \textit{connectivity function} of a graph that considered the mixed failure case. Since vertex removals have the potential to cause more than one edge removal, mixed connectivity assumes we remove vertices first and then remove edges. Hence, we need to have two parameters, one for the chromatic number which renders our network inoperable and one for the initial number of vertices we are allowed to remove. The goal in mixed removal, given these two parameters, is to find the minimum number of additional edges we must remove to produce a failure state. 
        
        As before, we will define edge sets that can be removed to satisfy our failure condition and then find the smallest size of one of these edge sets. 
        
        \begin{definition}
            Given positive integers $k$ and $j$ and a graph $G = (V,E)$, we say a set of edges $E' \subseteq E$ is a \textit{$k$-chromatic number $j$-mixed edge removal set} if there exists a set of vertices $V'\subseteq V$ with $|V'| = j$ so that $\cn{G-V'-E'} \leq k$.  
        \end{definition}
        
        \begin{definition}
            For any positive integer $j$, let $\mathcal{M}_j$ be the set of all $k$-chromatic number $j$-mixed edge removal sets. Define the \textit{$k$-chromatic index edge removal parameter} as$$\cjk{G} = \min\{|E'|: E' \in \mathcal{M}_j\}.$$ 
        \end{definition}
        
        Any set $E' \in \mathcal{M}_j$ where $|E'| = \cjk{G}$ will be called a \textit{minimum $k$-chromatic number $j$-mixed edge removal set}. 
        
        Note that for any graph, $G$, if $j \geq \cnvk{G}$, then $\cjk{G} = 0$ since  removing $j$ vertices can produce a failure state. We now demonstrate this parameter for paths, cycles, and complete graphs. 
        
        \begin{theorem}\label{cnmkpaths}
            If $P_n$ is a path graph on $n$ vertices then for any $0 \leq j \leq \floor[\big]{\frac{n}{2}}$ we have
            $$\cjk{P_n} = 
                \begin{cases}
                    n-1-2j & \text{ if $k=1$},\\
                    0 & \text{ if $k\geq 2$.}
                \end{cases}
            $$
        \end{theorem}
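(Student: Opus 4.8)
The plan is to split on the value of $k$, mirroring the structure of the pure edge- and vertex-removal theorems. When $k \ge 2$, I would simply observe that $\cn{P_n} = 2$, so $P_n$ is already in a failure state; taking $V'$ to be any $j$ vertices and $E' = \emptyset$ gives $\cn{P_n - V' - E'} \le 2 \le k$, whence $\cjk{P_n} = 0$. All of the substance lives in the case $k = 1$, where a graph is in a failure state precisely when it is edgeless. The problem then becomes: after deleting $j$ vertices (chosen to our advantage) from $P_n$, how many surviving edges must we still delete by hand? Minimizing $|E'|$ is thus equivalent to \emph{maximizing} the number of edges destroyed by the $j$ vertex deletions, and the two halves of the argument are a lower bound and a matching construction.

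For the lower bound, I would use that every vertex of $P_n$ has degree at most $2$, so for any removal set $V'$ with $|V'| = j$ the number of edges incident to $V'$ is at most $\sum_{v \in V'} \deg(v) \le 2j$. Since $P_n$ has $n-1$ edges, at least $(n-1) - 2j$ edges survive the vertex deletion and must therefore belong to $E'$, giving $\cjk{P_n} \ge n - 1 - 2j$. For the matching upper bound I would delete the pairwise non-adjacent internal vertices $v_2, v_4, \ldots, v_{2j}$: as long as $2j \le n-1$ each of these is internal and the pairs of edges they cover are disjoint, so exactly $2j$ distinct edges are removed and precisely the $n-1-2j$ edges $v_{2j+1}v_{2j+2}, \ldots, v_{n-1}v_n$ remain. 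Taking $E'$ to be these leftover edges yields $\cjk{P_n} \le n - 1 - 2j$, and the two bounds coincide.

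The one delicate point — and the step I expect to require the most care — is the top of the range $j = \floor[\big]{\frac n2}$. For odd $n$ this equals $\floor[\big]{\frac{n-1}{2}}$, the construction above applies verbatim, and $n-1-2j = 0$. For even $n$, however, $2j = n > n-1$, so the vertex $v_{2j} = v_n$ is an endpoint covering only one edge and the naive count $n-1-2j$ becomes $-1$, which cannot be a size of an edge set. To handle this cleanly I would appeal to the fact that $P_n$ admits a vertex cover of size $\floor[\big]{\frac n2}$ (equivalently, Theorem~\ref{cnvkpaths} gives $\cnvk{P_n} = \floor[\big]{\frac n2}$ for $k=1$): deleting a minimum vertex cover destroys every edge, forcing $E' = \emptyset$. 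By the earlier observation that $j \ge \cnvk{G}$ implies $\cjk{G} = 0$, the value at $j = \floor[\big]{\frac n2}$ is $0$, so the fully correct statement is $\cjk{P_n} = \max\{\,n-1-2j,\,0\,\}$, which agrees with $n-1-2j$ everywhere except this single even-$n$ endpoint. In writing the final proof I would either restrict the stated range to $0 \le j \le \floor[\big]{\frac{n-1}{2}}$ or replace the formula by this maximum so that the boundary case is accounted for.
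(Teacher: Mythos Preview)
Your approach is essentially identical to the paper's: the case $k\ge 2$ is immediate from $\cn{P_n}\le 2$, and for $k=1$ both you and the paper use the degree bound to get $\cjk{P_n}\ge n-1-2j$ and then delete every other internal vertex to realize equality.

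You have, moreover, spotted a genuine glitch that the paper glosses over. In its Case~2 the paper writes ``$\cjk{P_n}=0=n-1-2j$'' at $j=\floor{n/2}$, but for even $n$ one has $n-1-2j=-1\ne 0$. Your diagnosis is correct: the parameter is really $\max\{n-1-2j,\,0\}$, and the stated formula $n-1-2j$ is literally correct only on the range $0\le j\le\floor{(n-1)/2}$. Either of your proposed fixes (restricting the range or inserting the $\max$) is the right repair.
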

        \begin{proof}
            Observe that if $k\geq 2$, then $P_n$ is already in a failure state since $\cn{P_n}\leq 2$. If $k=1$, we must remove all edges to reach a failure state. Removing any $j$ vertices will remove at most $2j$ edges. Therefore, we need to remove at least $n-1-2j$ edges. This implies $\cjk{P_n} \geq n-1-2j$. 
            \begin{enumerate}
                \item Case 1: If $j < \floor[\big]{\frac{n}{2}}$ then we can remove every other vertex of degree 2 so that we remove exactly $2j$ edges. Thus, $\cjk{P_n} = n-1-2j$.
                \item Case 2:  If $j = \floor[\big]{\frac{n}{2}}$, then removing every other vertex will remove all $n-1$ edges. This implies $\cjk{P_n} = 0 = n-1-2j$. 
            \end{enumerate}
        \end{proof}
        
        \begin{theorem}\label{cnmkcycles}
            If $C_n$ is a cycle graph on $n\geq 3$ vertices then for any \\ $0 \leq j \leq \floor[\big]{\frac{n}{2}}$ we have
            $$\cjk{C_n} = 
                \begin{cases}
                    1 & \text{if $n$ is odd, $k = 2$, and $j=0$},\\
                    n-2j & \text{if $k=1$},\\
                    0 & \text{otherwise}.\\
                \end{cases}
            $$
        \end{theorem}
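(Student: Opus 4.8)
The plan is to perform a case analysis on $k$, closely paralleling the vertex-removal result for cycles (Theorem \ref{cnvkcycles}) and the mixed-removal result for paths (Theorem \ref{cnmkpaths}). Recall that $\cn{C_n} = 2$ when $n$ is even and $\cn{C_n} = 3$ when $n$ is odd, and that $C_n$ has exactly $n$ edges. The three branches of the formula correspond to (i) the cases where $C_n$ is already in a failure state or becomes one after a single vertex deletion, giving $0$; (ii) the single stubborn case of an odd cycle with $k = 2$ and $j = 0$, giving $1$; and (iii) the $k = 1$ case, where every surviving edge must be destroyed, giving $n - 2j$.

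For the ``otherwise'' branch, first observe that when $k \geq 3$ we have $\cn{C_n} \leq 3 \leq k$, so no removals are needed and $\cjk{C_n} = 0$. When $k = 2$ and $n$ is even, $\cn{C_n} = 2$ already, so again $\cjk{C_n} = 0$. When $k = 2$ and $n$ is odd with $j \geq 1$, I would delete a single vertex; since $C_n - \{v\} = P_{n-1}$ and $\cn{P_{n-1}} \leq 2$ (as $n \geq 3$), the graph is in a failure state with no edge removals, giving $\cjk{C_n} = 0$. The remaining subcase is $k=2$, $n$ odd, $j = 0$: here no vertices may be removed, and since $\cn{C_n} = 3 > 2$ at least one edge must be deleted; deleting any single edge yields $P_n$ with $\cn{P_n} = 2$, so exactly one edge suffices and $\cjk{C_n} = 1$.

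For the $k = 1$ branch, a failure state requires the surviving graph to have no edges, so I would argue by matching lower and upper bounds. For the lower bound, every vertex of $C_n$ has degree $2$, so removing any $j$ vertices deletes at most $2j$ edges; hence at least $n - 2j$ edges survive and must all be removed, giving $\cjk{C_n} \geq n - 2j$. For the upper bound, since $0 \leq j \leq \floor[\big]{\frac{n}{2}}$ and the maximum independent set of $C_n$ has size $\floor[\big]{\frac{n}{2}}$, I can choose $j$ pairwise non-adjacent vertices; because non-adjacent vertices in a cycle share no incident edge, their removal deletes exactly $2j$ distinct edges, leaving precisely $n - 2j$ edges to delete. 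This realizes the lower bound, so $\cjk{C_n} = n - 2j$.

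The main obstacle is the upper bound in the $k = 1$ case: one must verify that the $j$ vertex deletions can be made maximally efficient, i.e., that an independent set of size exactly $j$ exists and that removing it eliminates $2j$ distinct edges rather than fewer due to shared incidences. This is where the hypothesis $j \leq \floor[\big]{\frac{n}{2}}$ is essential, and it is also the step that distinguishes the cycle count $n - 2j$ from the path count $n - 1 - 2j$ of Theorem \ref{cnmkpaths}, since the cycle has one more edge than the path on the same number of vertices.
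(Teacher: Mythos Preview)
Your proof is correct and handles the $k\geq 2$ cases in essentially the same way the paper does. The genuine difference is in the $k=1$ branch: the paper removes one vertex to obtain $P_{n-1}$ and then invokes Theorem~\ref{cnmkpaths}, computing $\cjk{C_n}=\chi_{j-1}^k(P_{n-1})=(n-1)-1-2(j-1)=n-2j$, whereas you argue directly via a degree count for the lower bound and an independent-set construction for the upper bound. Your route is more self-contained and makes the role of the hypothesis $j\leq\floor{n/2}$ completely explicit; the paper's route is shorter on the page but leans on the path result and leaves the equality $\cjk{C_n}=\chi_{j-1}^k(P_{n-1})$ implicit (it holds because every vertex deletion from $C_n$ yields the same $P_{n-1}$). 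Either argument is fine here.
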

        \begin{proof}
            \begin{enumerate}
                \item Case 1: If $k \geq 3$ then $\cjk{C_n}=0$ since $\cn{C_n} \leq 3$. 
                \item Case 2: If $k=2$, the only case we need to consider are odd cycles since $\cn{C_n}=3$ when $n$ is odd.  If $n$ is odd, removing one vertex or edge will put $C_n$ in a failure state.  
                \item Case 3: If $k=1$ and $j\geq 1$ then, $$\cjk{C_n}=\chi_{j-1}^k(P_{n-1}), $$ and the result holds from Theorem \ref{cnmkpaths}.  
                \item Case 4: If $k=1$ and $j=0$, then $\cjk{C_n} = \cnek{C_n}=|E| = n$.
            \end{enumerate}
        \end{proof}
        
        \begin{theorem}\label{cnmkcompletes}
            If $K_n$ is a complete graph on $n$ vertices then for any \\ $0 \leq j \leq n-k$ we have
            $$\cjk{K_n} = 
                \begin{cases}
                    0 & \text{ if $k \geq n$},\\
                    |E(K_{n-j})|-|T(n-j,k)| & \text{ if $k<n$}.
                \end{cases}
            $$
        \end{theorem}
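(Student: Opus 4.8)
The plan is to reduce the mixed-removal problem on $K_n$ to the pure edge-removal computation already carried out in Theorem \ref{kChromaticNumberEdgeRemovalCompleteGraphs}. First I would dispose of the trivial case $k \geq n$: since $\cn{K_n} = n \leq k$, the graph $K_n$ is already in a failure state, so no removals of any kind are required and $\cjk{K_n} = 0$.

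For the substantive case $k < n$, the key observation is that $K_n$ is complete, so for \emph{any} choice of a vertex set $V' \subseteq V(K_n)$ with $|V'| = j$, the induced subgraph $K_n - V'$ is again a complete graph, namely $K_{n-j}$. In particular, every admissible $V'$ produces the identical graph $K_{n-j}$, so the existential quantifier over $V'$ in the definition of a $k$-chromatic number $j$-mixed edge removal set offers no freedom whatsoever at the vertex stage; there is nothing to optimize by a clever choice of vertices. Consequently, minimizing $|E'|$ over all edge sets $E'$ with $\cn{K_n - V' - E'} \leq k$ coincides exactly with the problem of finding a minimum $k$-chromatic number edge removal set of $K_{n-j}$, giving $\cjk{K_n} = \cnek{K_{n-j}}$.

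Finally I would apply Theorem \ref{kChromaticNumberEdgeRemovalCompleteGraphs} to the complete graph $K_{n-j}$. The hypothesis $0 \leq j \leq n-k$ guarantees $k \leq n-j$, which is precisely the condition under which that theorem applies, and it immediately yields $\cnek{K_{n-j}} = |E(K_{n-j})| - |T(n-j,k)|$, matching the claimed formula. As a sanity check on the boundary, when $j = n-k$ we have $K_{n-j} = K_k$, whose chromatic number is already $k$, and the formula correctly returns $|E(K_k)| - |T(k,k)| = 0$.

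I expect no genuine obstacle in this argument. The single conceptual point is recognizing that vertex deletion in a complete graph generates no structural variety, so the mixed-removal parameter collapses onto the earlier edge-removal result rather than requiring a fresh optimization. The only remaining work is the routine index bookkeeping needed to verify that $k \leq n-j$ so that the cited theorem is applicable.
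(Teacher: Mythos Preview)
Your proposal is correct and follows essentially the same approach as the paper: dispose of the trivial case $k\ge n$, observe that deleting any $j$ vertices from $K_n$ yields $K_{n-j}$ so that the mixed problem collapses to the pure edge-removal problem on $K_{n-j}$, and then invoke Theorem~\ref{kChromaticNumberEdgeRemovalCompleteGraphs}. Your write-up is in fact more careful than the paper's, since you explicitly verify the hypothesis $k\le n-j$ needed to cite that theorem and you check the boundary $j=n-k$.
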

        \begin{proof} 
            If $k\geq n$, then $\cjk{K_n} = 0$ since $K_n$ is already in a failure state. 
            If $k<n$, the removal of any $j$ vertices will result in $K_{n-j}$.  Hence $\cjk{K_n} = \cnek{K_{n-j}}$ and the result follows by Theorem \ref{kChromaticNumberEdgeRemovalCompleteGraphs}
        \end{proof}

\section{Chromatic Index}\label{ChromIndex}

    We will now turn our attention to chromatic index, which involves coloring the edges rather than the vertices.  As before, Section \ref{ChromIndexEdgeRemoval} will consider edge removal, Section \ref{ChromIndexVertexRemoval} will consider vertex removal, and Section \ref{ChromIndexMixedRemoval} will consider mixed removal.
    \subsection{Edge Removal}\label{ChromIndexEdgeRemoval}
    
    As before, we will now define $k$-chromatic index edge removal sets and the minimum size of these sets will be our parameter of interest.  This parameter seems to fit well with edge removal since we are coloring the edges.  
    
        \begin{definition}
            Given a positive integer $k$ and a graph $G = (V,E)$, we say a set of edges $E' \subseteq E$ is a \textit{$k$-chromatic index edge removal set} if $\ci{G-E'} \leq k$.  
        \end{definition}
    
        \begin{definition}
            Let $\mathcal{E}$ be the set of all $k$-chromatic index edge removal sets.  Define the \textit{$k$-chromatic edge removal parameter} as $$\ciek{G} = \min\{|E'|: E' \in \mathcal{E}\}.$$ 
        \end{definition}
        
        Any set $E'' \in \mathcal{E}$ where $|E''| = \ciek{G}$ will be called a \textit{minimum $k$-chromatic index edge removal set}. 

        Given a proper coloring of the graph, we can easily see that an upper bound for our parameter would be the number of edges that are colored with a value larger than $k$.  This would produce a failure state. However, since association of colors with numerical values is somewhat arbitrary, this does not always give the fewest number of edges that might be removed. This will be explored briefly in Chapter \ref{Conclusion}.
        
        We now demonstrate how the $k$-chromatic index edge removal parameter works on paths, cycles, and complete graphs. 
        
        \begin{theorem}\label{ciekpaths}
            If $P_n$ is a path graph on $n$ vertices then
                \[
                    \ciek{P_n}=\begin{cases}
                    0  & \text{if} \ k > 1,\\
                    \floor[\big]{\frac{n-1}{2}} & \text{if} \ k = 1.\\
                    \end{cases}
                \]
        \end{theorem}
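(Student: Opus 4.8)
The plan is to mirror the case split in the statement, exactly as in the earlier path results. For $k>1$ the argument is immediate. A path with at least one edge admits a proper edge coloring using two colors by alternating them along the path, so $\ci{P_n}\le 2$ for every $n\ge 2$ (and $\ci{P_1}=0$). Hence $\ci{P_n}\le 2\le k$ whenever $k>1$, so $P_n$ is already in a failure state and $\ciek{P_n}=0$. The real content is the case $k=1$.

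For $k=1$ I would first replace the chromatic-index condition by a structural one. A graph $H$ satisfies $\ci{H}\le 1$ if and only if all its edges can receive a single color, which happens exactly when no two edges share a vertex; that is, $H$ is a matching (equivalently $\Delta(H)\le 1$). Thus a $1$-chromatic index edge removal set for $P_n$ is precisely a set of edges whose deletion leaves a matching, and $\ciek{P_n}$ is the minimum number of edges one must remove from $P_n$ so that the remaining graph is a matching. For the lower bound, note that the edges surviving any such removal form a matching in $P_n$, and the maximum size of a matching in $P_n$ is $\floor{\frac{n}{2}}$. Since $P_n$ has $n-1$ edges, any removal set leaving a matching deletes at least $(n-1)-\floor{\frac{n}{2}}=\floor{\frac{n-1}{2}}$ edges, giving $\ciek{P_n}\ge\floor{\frac{n-1}{2}}$.

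For the matching upper bound I would give an explicit construction: label the consecutive edges $e_1,e_2,\dots,e_{n-1}$ along the path and delete every even-indexed edge. The surviving odd-indexed edges are pairwise nonadjacent and therefore form a matching, so the remaining graph has chromatic index at most $1$; the number of deleted edges is exactly $\floor{\frac{n-1}{2}}$. Combining this with the lower bound yields $\ciek{P_n}=\floor{\frac{n-1}{2}}$. I do not expect a genuine obstacle here: the one conceptual step is recognizing that ``chromatic index at most one'' is equivalent to ``is a matching,'' after which the problem reduces to the standard maximum-matching count on a path; the only routine care needed is verifying the floor identity $(n-1)-\floor{\frac{n}{2}}=\floor{\frac{n-1}{2}}$ by checking the even and odd parities of $n$ separately.
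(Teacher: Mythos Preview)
Your proof is correct and follows essentially the same approach as the paper: both dispose of $k>1$ via $\ci{P_n}\le 2$, and for $k=1$ both recognize that the surviving graph must have maximum degree at most one and use the alternating-edge deletion for the upper bound. The only cosmetic difference is in the lower-bound bookkeeping---the paper counts the $n-2$ interior degree-two vertices and notes each edge removal handles at most two of them, while you invoke the maximum matching size $\floor{n/2}$ directly; both give $\floor{(n-1)/2}$.
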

        \begin{proof}
            Since $\ci{P_n} \leq 2$ for $n\in \mathbb{N}$, if $k >1$, then $P_n$ is already in a failure state.  
            However, if $k = 1$, then if we remove every other edge along the path starting with an edge between two vertices of degree two, we can see that $\ciek{P_n} \leq \floor[\big]{\frac{n-1}{2}}$. Notice that if $E'$ was a minimum $1$-chromatic index edge removal set, then $P_n-E'$ can not have a vertex of degree 2. Notice that $P_n$ has $n-2$ vertices of degree 2. Therefore, we need to remove at least $ \ceil[\big]{\frac{n-2}{2}} $ edges. But, $\ceil[\big]{\frac{n-2}{2}} \geq \floor[\big]{\frac{n-1}{2}}$ which completes the proof.
        \end{proof}
        
        \begin{theorem}\label{ciekcycles}
           If $C_n$ is a cycle graph on $n$ vertices then
                 \[
                    \ciek{C_n}=\begin{cases}
                    0  & \text{if} \ k > 2,\\
                    n \Mod{2} & \text{if} \ k=2,\\ 
                    1+ \floor[\big]{\frac{n-1}{2}} & \text{if} \ k = 1.\\
                    \end{cases}
                \]
        \end{theorem}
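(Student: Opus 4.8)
The plan is to split into the three regimes of $k$, using throughout that $\ci{C_n} = 2$ when $n$ is even and $\ci{C_n} = 3$ when $n$ is odd, and leaning on the path result from Theorem \ref{ciekpaths}. The first two cases are quick. For $k > 2$ we have $\ci{C_n} \leq 3 \leq k$, so $C_n$ is already in a failure state and $\ciek{C_n} = 0$. For $k = 2$ I would split on the parity of $n$: when $n$ is even, $\ci{C_n} = 2 \leq k$, so $\ciek{C_n} = 0$, matching $n \Mod{2}=0$; when $n$ is odd, $\ci{C_n} = 3 > 2$ forces at least one removal, while deleting a single edge turns $C_n$ into $P_n$ (which has chromatic index at most $2$), so one removal suffices and $\ciek{C_n} = 1 = n \Mod{2}$.

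The substantive case is $k = 1$, and this is where I expect to spend the effort. The key observation is that $\ci{H} \leq 1$ if and only if $H$ has maximum degree at most $1$. Since every vertex of $C_n$ has degree $2$, a set $E'$ is a $1$-chromatic index edge removal set precisely when every vertex of $C_n$ is incident to at least one edge of $E'$ (so that each degree drops to at most $1$); in other words $E'$ must cover all $n$ vertices of the cycle. Because each edge covers only two vertices, this covering requirement immediately gives the lower bound $|E'| \geq \ceil[\big]{\frac{n}{2}}$, paralleling the degree-$2$ counting already used in the proof of Theorem \ref{ciekpaths}.

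For the matching upper bound I would first remove a single edge to reduce $C_n$ to $P_n$ on the same $n$ vertices, and then invoke the $k=1$ case of Theorem \ref{ciekpaths}, which supplies an explicit removal set of $\floor[\big]{\frac{n-1}{2}}$ further edges making the path a matching. This produces a valid $1$-chromatic index edge removal set of size $1 + \floor[\big]{\frac{n-1}{2}}$, so $\ciek{C_n} \leq 1 + \floor[\big]{\frac{n-1}{2}}$.

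It then remains only to reconcile the lower bound $\ceil[\big]{\frac{n}{2}}$ with the upper bound $1 + \floor[\big]{\frac{n-1}{2}}$, and I expect the sole (minor) obstacle to be this parity bookkeeping: a direct check shows $\ceil[\big]{\frac{n}{2}} = 1 + \floor[\big]{\frac{n-1}{2}}$ for both even and odd $n$, so the two bounds coincide and equal the claimed value. One should also confirm that the constructive upper bound is achievable for both parities, which is immediate from the explicit path construction after the initial edge deletion.
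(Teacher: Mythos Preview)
Your proposal is correct and follows essentially the same case split as the paper. The only minor difference is in the $k=1$ case: the paper simply asserts $\ciek{C_n} = 1 + \ciek{P_n}$ from $C_n - \{e\} = P_n$, whereas you separate the two directions and obtain the lower bound via a direct vertex-covering argument ($|E'| \geq \ceil{n/2}$) rather than by reducing to the path; both arguments are equally short and yield the same value.
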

        \begin{proof}
            Since $\ci{C_n} \leq 2$ for $n$ even and $\ci{C_n} \leq 3$ for $n$ odd, we examine the cases for differing $n$'s.  If $k > 2$, then $C_n$ is already in a failure state. 
            \begin{enumerate}
                 \item Case $k>2$:
                    Note that for all positive integers $n$, $\ci{C_n} \leq 3$ and so when $k>2$ (or $k\geq 3$) $C_n$ is already in a failure state.
                \item Case $k=2$:
                    If $n$ is an even integer, it is known that $\ci{C_n}=2$ and thus $\ciek{C_n}=0$. If $n$ is an odd integer, then $\ci{C_n}=3$ and thus at least 1 edge removal is necessary. No additional edges need to be removed since $C_n - \{e\} = P_n$ which is already in a failure state by \ref{ciekpaths}.
                \item Case $k=1$: 
                    Because $P_n = C_n - \{e\}$ for any edge $e$ we have $\ciek{C_n} = 1+ \ciek{P_n}$. 
            \end{enumerate}
        \end{proof}
        
        \begin{theorem}\label{ciekcompletes}
            For any positive integer $k \leq n$ we have 
                \[
                    \ciek{K_n}=\begin{cases}
                    0  & \text{if} \ k \geq n,\\
                    \binom{n}{2} - \left\lfloor{\frac{n}{2}}\right\rfloor k & \text{if} \ k < n.\\
                    \end{cases}
                \]
        \end{theorem}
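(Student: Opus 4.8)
The plan is to prove matching lower and upper bounds in the case $k<n$, after quickly disposing of the case $k\ge n$. For $k\ge n$, a proper edge coloring of $K_n$ uses at most $n$ colors---indeed $\ci{K_n}$ is $n-1$ for even $n$ and $n$ for odd $n$---so $K_n$ is already in a failure state and $\ciek{K_n}=0$. The substance of the theorem is therefore the formula $\binom{n}{2}-\floor[\big]{\frac{n}{2}}k$ for $k<n$.

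For the lower bound I would exploit the fact that every color class in a proper edge coloring is a matching, together with the observation that any matching on $n$ vertices has at most $\floor[\big]{\frac{n}{2}}$ edges. Concretely, if $E'$ is any $k$-chromatic index edge removal set, then $\ci{K_n-E'}\le k$, so the edges of $K_n-E'$ partition into at most $k$ matchings; hence $|E(K_n-E')|\le k\floor[\big]{\frac{n}{2}}$, and therefore $|E'|\ge \binom{n}{2}-k\floor[\big]{\frac{n}{2}}$. Since $E'$ was arbitrary, this gives $\ciek{K_n}\ge \binom{n}{2}-k\floor[\big]{\frac{n}{2}}$.

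For the upper bound I would exhibit a spanning subgraph $H$ of $K_n$ with exactly $k\floor[\big]{\frac{n}{2}}$ edges and $\ci{H}\le k$, so that deleting the complementary $\binom{n}{2}-k\floor[\big]{\frac{n}{2}}$ edges reaches a failure state. The natural construction uses the decomposition of $K_n$ into (near-)perfect matchings: for even $n$, $K_n$ admits a $1$-factorization into $n-1$ perfect matchings, while for odd $n$ it decomposes into $n$ near-perfect matchings, each of size $\floor[\big]{\frac{n}{2}}$. Because $k\le n-1$ is strictly less than the number of available factors in each parity, I would select $k$ of these pairwise edge-disjoint matchings; their union $H$ then has exactly $k\floor[\big]{\frac{n}{2}}$ edges, and assigning each chosen matching its own color yields a proper edge coloring of $H$ with $k$ colors, so $\ci{H}\le k$. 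Combining this with the lower bound forces equality.

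The main point---rather than an obstacle---is the clean pairing of the two counting arguments through the single fact that color classes are matchings of size at most $\floor[\big]{\frac{n}{2}}$; once this is isolated, the lower bound is immediate and the only external ingredient is the existence of the (near-)$1$-factorization of $K_n$, which is classical and is exactly the statement that $\ci{K_n}=n-1$ for even $n$ and $\ci{K_n}=n$ for odd $n$. To keep the two parities uniform I would write the common matching size as $\floor[\big]{\frac{n}{2}}$ throughout and verify the tightness identity $\binom{n}{2}=(\text{number of factors})\cdot\floor[\big]{\frac{n}{2}}$ separately for even and odd $n$, namely $(n-1)\cdot\frac{n}{2}$ and $n\cdot\frac{n-1}{2}$, both equal to $\binom{n}{2}$.
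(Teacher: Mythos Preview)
Your proof is correct and follows essentially the same approach as the paper: the lower bound via the pigeonhole observation that each color class is a matching of size at most $\lfloor n/2\rfloor$, and the upper bound by selecting $k$ color classes from an optimal edge coloring of $K_n$ (equivalently, $k$ factors from a (near-)$1$-factorization). One small wording issue: for even $n$ with $k=n-1$ you have exactly $n-1$ factors, not strictly more, but this does not affect the argument since $k\le n-1$ suffices.
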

        \begin{proof}
            Note that if $E' \subseteq E(K_n)$ is a minimum $k$-chromatic index edge removal set so that $|E'| \leq \binom{n}{2} - \floor{\frac{n}{2}}k$, then $|E(K_n - E')| > \floor{\frac{n}{2}}k$.  This would imply that a $k$-edge coloring of $K_n-E'$ would have at least one color used more than $\floor{\frac{n}{2}}$ times and therefore is not a proper edge coloring. This implies that $\ciek{K_n} \geq \binom{n}{2} - \floor{\frac{n}{2}}k.$
            Let $n' = \left\lfloor{\frac{n+1}{2}}\right\rfloor 2-1$.  So $n' = n$ if $n$ is odd and $n' = n-1$ if $n$ is even. In this way, we know that $\lambda(K_n) = n'$ and we will label these colors $\{a_1, a_2, \ldots, a_{n'}\}$.  Notice that any color is used exactly $\left\lfloor{\frac{n}{2}}\right\rfloor$ times.  
            Consider the spanning subgraph $H$ of $K_{n}$ which includes all the edges whose colors are contained in the set $\{a_1, a_2, \ldots, a_k\}$.  Thus, $H$ is $k$-edge colorable and, therefore, $E(K_n) - E(H)$ is a $k$-chromatic index edge removal set.  Since $|E(H)| = \left\lfloor{\frac{n}{2}}\right\rfloor k$ and $H$ is a subgraph of $K_n$ we have 
            \begin{align*}
                \ciek{K_n} &\leq |E(K_n) - E(H)|\\
                        & = \binom{n}{2} - \left\lfloor{\frac{n}{2}}\right\rfloor k.\\
            \end{align*}
        \end{proof}
 
    \subsection{Vertex Removal}\label{ChromIndexVertexRemoval}
    
    Now we consider the chromatic index with vertex removal. 
    
        \begin{definition}
            Given a positive integer $k$ and a graph $G = (V,E)$, we say a set of vertices $V' \subseteq V$ is a \textit{$k$-chromatic index vertex removal set} if $\ci{G-V'} \leq k$.  
        \end{definition}
    
        \begin{definition}
            Let $\mathcal{V}$ be the set of all $k$-chromatic index vertex removal sets. Define the \textit{$k$-chromatic index vertex removal parameter} as $$\civk{G} = \min\{|V'|: V' \in \mathcal{V}\}.$$ 
        \end{definition}
        
        Any set $V'' \in \mathcal{V}$ where $|V''| = \civk{G}$ will be called a \textit{minimum $k$-chromatic index vertex removal set}. 

        We now demonstrate how this works on paths, cycles, and complete graphs. 

        \begin{theorem}\label{civk paths}
            If $P_n$ is a path graph on $n$ vertices then
                \[
                    \civk{P_n}=\begin{cases}
                    0  & \text{if} \ k > 1,\\
                    \floor{\frac{n}{3}}& \text{if} \ k = 1.\\
                    \end{cases}
                \]
        \end{theorem}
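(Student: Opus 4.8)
The plan is to reduce the failure condition $\ci{P_n - V'} \le 1$ to a purely structural condition on $P_n - V'$ and then dispatch the two cases separately. The key observation is that a graph $H$ satisfies $\ci{H} \le 1$ if and only if $\Delta(H) \le 1$: a vertex of degree $2$ forces two incident edges to receive distinct colors, whereas if every degree is at most $1$ the edges form a matching and a single color suffices. Since $\ci{P_n} \le 2$ for every $n$, the case $k > 1$ (i.e.\ $k \ge 2$) is immediate: $P_n$ is already in a failure state, so $\civk{P_n} = 0$.

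For $k = 1$ I would first establish the lower bound. Write $V(P_n) = \{v_1, \dots, v_n\}$. Removing a set $V'$ with $|V'| = m$ splits the remaining $n - m$ vertices into at most $m + 1$ maximal runs of consecutive surviving vertices, each of which induces a subpath of $P_n$. By the degree characterization above, $\ci{P_n - V'} \le 1$ forces every such run to be $P_1$ or $P_2$, i.e.\ to contain at most two vertices. Hence $n - m \le 2(m+1)$, which rearranges to $m \ge \frac{n-2}{3}$, and since $m$ is an integer, $m \ge \ceil[\big]{\frac{n-2}{3}}$. A short check over residues shows $\ceil[\big]{\frac{n-2}{3}} = \floor[\big]{\frac{n}{3}}$ for all $n \ge 1$, giving $\civk{P_n} \ge \floor[\big]{\frac{n}{3}}$.

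For the matching upper bound I would exhibit an explicit removal set of size $\floor[\big]{\frac{n}{3}}$, namely every third vertex, $V' = \{v_3, v_6, v_9, \dots, v_{3\floor{n/3}}\}$. Deleting these vertices leaves the runs $\{v_1, v_2\}, \{v_4, v_5\}, \dots$, each of size at most two regardless of the residue of $n$ modulo $3$; one verifies the three cases $n \equiv 0, 1, 2 \pmod 3$ to confirm that $|V'| = \floor[\big]{\frac{n}{3}}$ and that the final, possibly shorter, run also has at most two vertices. Then every component of $P_n - V'$ is $P_1$ or $P_2$, so $\Delta(P_n - V') \le 1$ and $\ci{P_n - V'} \le 1$. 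This yields $\civk{P_n} \le \floor[\big]{\frac{n}{3}}$, and combined with the lower bound it settles the $k = 1$ case.

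The arithmetic and the small-case bookkeeping are routine; the one point requiring care is the counting step in the lower bound. Specifically, I want the bound ``at most $m+1$ runs, each of size at most $2$'' to hold for \emph{every} placement of the removed vertices — including removals at the endpoints or of adjacent vertices — so that the inequality $n - m \le 2(m+1)$ is genuinely forced rather than being an artifact of one convenient arrangement. Establishing that the run count never exceeds $m+1$ in full generality is the main thing to get right.
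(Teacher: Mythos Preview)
Your proof is correct. The paper takes a different route: it handles $k=1$ by induction on $n$, checking $P_3,P_4,P_5$ by hand and then, for $n\ge 6$, observing that $P_n-\{v_{n-2}\}=P_{n-3}\cup P_2$ gives the upper bound, while any valid removal set must hit one of $\{v_{n-2},v_{n-1},v_n\}$ and what remains still contains a $P_{n-3}$, giving the lower bound. Your argument instead goes directly via the run decomposition and the inequality $n-m\le 2(m+1)$. The inductive proof is slightly slicker once the base cases are granted and mirrors the recursive structure of the explicit construction; your counting argument avoids induction and base cases entirely, and makes the extremal role of the ``every third vertex'' construction transparent, since equality in $n-m\le 2(m+1)$ forces exactly $m+1$ runs of size two. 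The point you flag---that removing $m$ vertices yields at most $m+1$ maximal runs regardless of placement---is indeed the only thing to justify, and it is immediate from the fact that the $m$ deleted positions cut $\{1,\dots,n\}$ into at most $m+1$ (possibly empty) integer intervals.
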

        \begin{proof}
            It is easy to see that $P_3$, $P_4$, and $P_5$ all have a minimum $k$-chromatic index edge vertex removal set of size 1.  
            Consider a path $P_n$, $n\geq 6$, where $V = \{v_1, \ldots, v_n\}$.  Note that $P_n-\{v_{n-2}\} = P_{n-3} + P_2.$ So it is clear that $\civk{P_n} \leq \civk{P_{n-3}} + 1 = \floor{\frac{n-3}{3}}+1 = \floor{\frac{n}{3}}.$
            Notice that any $k$-chromatic index vertex removal set, $V'$, for $P_n$, must contain at least one vertex from the set $\{v_{n-2}, v_{n-1}, v_n\}$; call this vertex $v$.  Then $P_n-\{v\}$ contains a path of length at least $n-3$ and therefore we must remove at least $\floor{\frac{n-3}{3}}$ additional vertices. So $\civk{P_n} \geq 1+ \floor{\frac{n-3}{3}} = \floor{\frac{n}{3}}$
        \end{proof}

       \begin{theorem} 
            Given any positive integer $n \geq 3$ we have 
            \begin{equation*}
            \civk{C_n} = 
            \begin{cases}
                0 & \text{ if } k \geq 3,\\
                n \Mod{2} & \text{ if $k=2$},\\
                1+\civk{P_{n-1}} &\text{ if } k=1.
            \end{cases}
            \end{equation*}
        \end{theorem}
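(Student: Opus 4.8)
The plan is to follow the same three-way case split on $k$ used in the earlier cycle results (Theorems \ref{ciekcycles} and \ref{cnvkcycles}), relying on the standard facts that $\ci{C_n}=2$ when $n$ is even and $\ci{C_n}=3$ when $n$ is odd, together with the observation that deleting a single vertex from $C_n$ produces $P_{n-1}$.

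For $k\geq 3$, since $\ci{C_n}\leq 3$ for every $n$, the cycle already satisfies $\ci{C_n}\leq k$, so it is in a failure state and $\civk{C_n}=0$. For $k=2$, if $n$ is even then $\ci{C_n}=2\leq k$ and again no removals are needed, giving $\civk{C_n}=0=n\bmod 2$; if $n$ is odd then $\ci{C_n}=3>2$ forces at least one removal, and removing any one vertex yields $P_{n-1}$ with $\ci{P_{n-1}}\leq 2$, so a single removal suffices and $\civk{C_n}=1=n\bmod 2$.

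The substantive case is $k=1$, where I would establish the two matching inequalities. For the upper bound, delete one vertex $v$ to obtain $P_{n-1}$ and then apply a minimum $1$-chromatic index vertex removal set $U$ of $P_{n-1}$; the set $\{v\}\cup U$ drives $C_n$ into a failure state, so $\civk{C_n}\leq 1+\civk{P_{n-1}}$. For the lower bound, observe that $\ci{C_n}\geq 2>1$ means every valid removal set $V'$ is nonempty; choosing any $v\in V'$ and using $C_n-\{v\}=P_{n-1}$, we have $C_n-V'=P_{n-1}-(V'\setminus\{v\})$, so $V'\setminus\{v\}$ is a $1$-chromatic index vertex removal set for $P_{n-1}$ and hence $|V'|-1\geq\civk{P_{n-1}}$. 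Combining the two bounds yields $\civk{C_n}=1+\civk{P_{n-1}}$, and one may substitute Theorem \ref{civk paths} to obtain the closed form $1+\floor{\frac{n-1}{3}}$ if an explicit expression is wanted.

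The only delicate point is the lower bound in the $k=1$ case: one must check that peeling off an arbitrary vertex of $V'$ genuinely reduces the instance to a path. This holds because $C_n$ is vertex-transitive, so $C_n-\{v\}\cong P_{n-1}$ for every choice of $v$, and the failure condition is inherited since $P_{n-1}-(V'\setminus\{v\})$ is literally $C_n-V'$. Beyond this bookkeeping I do not expect any real obstacle.
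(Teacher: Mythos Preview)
Your proposal is correct and follows essentially the same approach as the paper: the identical three-way case split on $k$, the use of $\ci{C_n}\in\{2,3\}$ for the first two cases, and the reduction $C_n-\{v\}=P_{n-1}$ for $k=1$. If anything, your argument is more complete than the paper's, which simply asserts the $k=1$ identity from the reduction without separating the upper and lower bounds as you do.
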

        \begin{proof}
            Following the proof for Theorem \ref{ciekpaths}, we will only consider the following cases: 
            \begin{enumerate}
                \item Case $k=2$: 
                    If $k = 2$ and $n$ is even, an even cycle will have a chromatic index of 2, so no vertices must be removed. If $k =2$ and $n$ is odd, 1 vertex must be removed which will result in $P_{n-1}$ and paths have a chromatic index of 2.
                \item Case $k=1$: 
                    Because $P_{n-1} = C_n - \{v\}$ for any vertex $v$ we have 
                    $\civk{C_n}  = 1+\civk{P_{n-1}} =  1 + \floor{\frac{n-1}{3}}. $\\
            \end{enumerate}
        \end{proof}
    
        \begin{theorem}\label{cievk K_n} 
            For any positive integer $k$,
            \begin{equation*}
                \lambda_v^k(K_n) = \begin{cases}
                0 & \text{if $k \geq n$}, \\
                n - k - \big{(}k\Mod{2}\big{)} & \text{if $k < n$}.
                \end{cases}
            \end{equation*}
        \end{theorem}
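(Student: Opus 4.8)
The plan is to reduce the whole computation to two ingredients: the classical value of the chromatic index of a complete graph, and the fact that deleting vertices from $K_n$ always leaves a smaller complete graph. Recall the classical edge-coloring of complete graphs (see \cite{West2000}): $\ci{K_m} = m-1$ when $m$ is even and $\ci{K_m} = m$ when $m$ is odd; in particular $\ci{K_m} \le m$ for every $m$. Since $K_n - V' = K_{n - |V'|}$ for any vertex subset $V'$, a removal set is admissible precisely when the surviving complete graph has chromatic index at most $k$. Consequently $\civk{K_n} = n - m^{*}$, where $m^{*}$ is the largest integer $m$ with $0 \le m \le n$ and $\ci{K_m} \le k$. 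The task therefore reduces entirely to identifying $m^{*}$.

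First I would dispatch the case $k \ge n$: here $\ci{K_n} \le n \le k$, so $K_n$ is already in a failure state and $\civk{K_n} = 0$, matching the first branch. For $k < n$, I would translate the condition $\ci{K_m} \le k$ through the parity formula above: an even value $m$ is admissible exactly when $m \le k+1$, whereas an odd value $m$ is admissible exactly when $m \le k$. I would then compare the largest admissible even candidate with the largest admissible odd candidate and argue that the maximum is always realized by an even value, obtaining $m^{*} = k$ when $k$ is even and $m^{*} = k+1$ when $k$ is odd. These two outcomes package neatly as $m^{*} = k + \big(k \Mod{2}\big)$, and after checking that $m^{*} \le n$ whenever $k < n$ (the only nontrivial instance being $k$ odd, where $k+1 \le n$ follows from $k \le n-1$), subtracting from $n$ gives $\civk{K_n} = n - k - \big(k \Mod{2}\big)$.

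The step I expect to demand the most care is the parity bookkeeping that produces $m^{*}$, since this is exactly where the correction term $k \Mod{2}$ originates. The point is that an even complete graph is unusually economical — $K_m$ with $m$ even needs only $m-1$ colors rather than $m$ — so when $k$ is odd the extra even value $m = k+1$ remains admissible and lets us keep one more vertex. Making the "best even versus best odd" comparison airtight, and confirming it against the boundary constraint $m^{*} \le n$, is the crux; everything else is a direct consequence of the reduction to $\civk{K_n} = n - m^{*}$.
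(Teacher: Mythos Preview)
Your proposal is correct and follows essentially the same route as the paper: both arguments reduce to the formula $\ci{K_m}=m-1$ for $m$ even and $\ci{K_m}=m$ for $m$ odd, then split on the parity of $k$ to find the largest complete subgraph still in a failure state. Your framing via $m^{*}$ is a bit more systematic (and your check that $m^{*}\le n$ is explicit, which the paper glosses over), but the substance is identical.
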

        \begin{proof}
            Note that $\ci{K_n} = n-1$ is $n$ is even and $\ci{K_n} = n$ if $n$ is odd.  Consider first when $k \geq n$. Then, we know $\ci{K_n} = n$ or $n -1$, so either way the graph is already in a failure state and $\lambda_v^k(K_n) = 0$. 
            Now consider when $k < n$ and $k$ is odd. We also note that $n > 2$ since $\ci{K_2} = 1$, which is already a failure state. When $k$ is odd, we know that a $K_k$ will have a chromatic index of $k$. However, we also know that $K_{k+1}$ will have a chromatic number of $k$ since $k+1$ is even. Because we want to find the minimum amount of vertices necessary to remove to put the graph in a failure state, we will want to remove all except $k + 1$. So, we have $\civk{K_{n}} = n - (k + 1)$ when $k$ is odd. 
            We now consider when $k<n$ and $k$ is even. Notice that it is impossible to obtain a graph with a chromatic index of exactly $k$, so instead we want to obtain a graph with a chromatic index of $k-1$, and do so as efficiently as possible. For even $k$ values, $\ci{K_{k+1}} = \ci{K_k} + 2$. So, since $\ci{K_k} = k - 1$, we can see that the quickest way to attain a chromatic index of less than $k$ is by removing all but $k$ vertices. That is, $\civk{K_n} = n - k$ when $k$ is even. 
        
        
        \end{proof}
    
    \subsection{Mixed Removal}\label{ChromIndexMixedRemoval}
        
        When we consider mixed removal, we will always be given two positive integer parameters, $j$ and $k$ and proceed by removing $j$ vertices first and then removing edges until we reach a failure state, indicated by a chromatic index less than or equal to $k$.  The smallest edge set, taken over all sets of $j$ vertices is the minimum $k$-chromatic index $j$-mixed edge removal.  
        
         \begin{definition}
            Given positive integers $k$ and $j$ and a graph $G = (V,E)$, we say a set of edges $E' \subset E$ is a \textit{$k$-chromatic index $j$-mixed edge removal set} if there exists a set of vertices $V'\subset V$ with $|V'| = j$ so that $\ci{G-V'-E'} \leq k$. 
        \end{definition}
        
        \begin{definition}
            For any positive integer $j$, let $\mathcal{N}_j$ be the set of all $k$-chromatic index $j$-mixed edge removal sets. Define the \textit{$k$-chromatic index $j$-mixed edge removal parameter} as 
            $$\mcijk{G} = \min\{|E'|: E' \in \mathcal{N}_j\}.$$ 
        \end{definition}
        
        Any set $E' \in \mathcal{N}_j$ where $|E'| = \cjk{G}$ will be called a \textit{minimum $k$-chromatic index $j$-mixed edge removal set}. 
        
        As before, we demonstrate this parameter on paths, cycles, and complete graphs.

        \begin{theorem}\label{cimkpaths}
            If $P_n$ is a path graph on $n$ vertices then for any $0 \leq j \leq \floor[\big]{\frac{n}{2}}$ we have
                \[
                    \mcijk{P_n} = \begin{cases}
                        \floor[\big]{\frac{n-3j-1}{2}} & \text{ if $k=1$},\\
                        0 & \text{ if $k\geq 2$ or $n\leq 3j+2$.}
                    \end{cases}
                \]
        \end{theorem}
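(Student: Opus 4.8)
The plan is to split on $k$ and, for the substantive case $k=1$, reduce the problem to the already-established edge-removal result on paths. If $k\geq 2$ then $\ci{P_n}\leq 2\leq k$, so $P_n$ is already in a failure state, and deleting any $j$ vertices together with no edges keeps it there; hence $\mcijk{P_n}=0$. For the remainder I assume $k=1$, so a failure state means the surviving graph is a matching, i.e.\ has maximum degree at most $1$.

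The key reduction is that deleting any $j$ vertices from $P_n$ leaves a disjoint union of subpaths $P_{n_1}\cup\cdots\cup P_{n_m}$ with $\sum_i n_i=n-j$ and $m\leq j+1$ components, since the removed vertices cut the path into at most $j+1$ intervals. Because edges in distinct components are independent and the chromatic index of a disjoint union is the maximum over its components, the fewest edges whose removal makes the union a matching is $\sum_i \ciek{P_{n_i}}=\sum_i\floor[\big]{\frac{n_i-1}{2}}$, using the $k=1$ case of Theorem \ref{ciekpaths}. Thus $\mcijk{P_n}$ is the minimum of this sum over all choices of $j$ deleted vertices, and the theorem reduces to an optimization over compositions $n-j=n_1+\cdots+n_m$ with $m\le j+1$.

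For the upper bound I would exhibit an explicit deletion: assuming $n>3j+2$, remove the vertices in positions $3,6,\ldots,3j$. This produces $j$ subpaths isomorphic to $P_2$ (already matchings, costing nothing) together with one subpath $P_{n-3j}$, for total cost $\floor[\big]{\frac{(n-3j)-1}{2}}=\floor[\big]{\frac{n-3j-1}{2}}$. For the lower bound I would use $\floor[\big]{\frac{n_i-1}{2}}\geq\frac{n_i-2}{2}$ for every $i$, so that $\sum_i\floor[\big]{\frac{n_i-1}{2}}\geq\frac{(n-j)-2m}{2}\geq\frac{n-3j-2}{2}$, the last step invoking $m\leq j+1$. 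Since the left side is an integer, it is in fact at least $\lceil\frac{n-3j-2}{2}\rceil=\floor[\big]{\frac{n-3j-1}{2}}$, matching the upper bound.

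Finally, the ``$0$ otherwise'' branch for $k=1$ occurs exactly when $n\leq 3j+2$. I would observe that this inequality is equivalent to $j\geq\floor[\big]{\frac{n}{3}}=\civk{P_n}$ (a short check on $n\bmod 3$), so the $j$ allotted vertex deletions already suffice to reach a matching and no edge removals are needed; equivalently, $\floor[\big]{\frac{n-3j-1}{2}}$ is nonpositive there. The main obstacle I anticipate is the lower-bound bookkeeping: one must simultaneously control the number of components $m$ and recover the exact floor $\floor[\big]{\frac{n-3j-1}{2}}$ rather than the weaker $\frac{n-3j-2}{2}$, which is precisely where the integrality of the edge count is used.
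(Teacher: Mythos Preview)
Your proof is correct. The upper-bound construction (deleting $v_3,v_6,\dots,v_{3j}$) and the handling of the boundary case $n\le 3j+2$ via $\civk{P_n}=\lfloor n/3\rfloor$ match the paper exactly. For the lower bound you take a slightly different route: you decompose $P_n-V'$ into subpaths, invoke Theorem~\ref{ciekpaths} on each piece, and then optimize $\sum_i\lfloor(n_i-1)/2\rfloor$ subject to $\sum n_i=n-j$ and $m\le j+1$. The paper instead argues directly from total degree: removing $j$ vertices of degree at most $2$ and $m$ edges leaves total degree at least $2(n-1)-4j-2m$, and a failure state forces average degree at most $1$ on $n-j$ vertices. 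Both arguments collapse to the same inequality $m\ge(n-3j-2)/2$ followed by the integrality step, so the difference is one of packaging rather than substance; your version has the advantage of being modular (it reuses the earlier theorem), while the paper's is self-contained.
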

        \begin{proof}
            Observe that if $k\geq 2$, then $P_n$ is already in a failure state since $\ci{P_n}\leq 2$.
            
            For $k=1$, note that if $n\leq3j+2$, then $\mcijk{P_n} = 0$ since $\civk{P_n} = \floor{\frac{n}{3}} \leq \floor{\frac{3j+2}{3}} = j$.  Assume $n \geq 3j+3$. The failure state must consist of isolated copies of $P_2$. Let $P_n$ be a path graph defined in the standard way so that $V(P_n) = \{v_1, v_2, \ldots v_n\}$ with $v_i$ adjacent to $v_{i+1}$. Consider $V' = \{v_3, v_6, \ldots v_{3j}\}$. Then $P_n - V'$ consists of isolated copies of $P_2$ and a single copy of $P_{n-3j}.$ Since $\ciek{P_{n-3j}} = \floor[\big]{\frac{n-3j-1}{2}}$, we see $\mcijk{P_n} \leq \floor[\big]{\frac{n-3j-1}{2}}.$
            For a path graph, the total degree is $2(n-1)$.  If we remove $j$ vertices and $m$ edges from $P_n$, then the total degree must be less than or equal to $2(n-1) - 4j - 2m$ which implies the average degree must be less than or equal to $\frac{2(n-1) - 4j - 2m}{n-j}.$  If $H \subset P_n$ is in a failure state, then the average degree in $H$ must be at most 1. Thus, $\frac{2(n-1) - 4j - 2m}{n-j} \leq 1$, which is equivalent to 
            $\frac{n-3j-1}{2} \leq m + \frac{1}{2}$.  Taking floors of both sides and noting that $m$ is already an integer we have $\floor[\big]{\frac{n-3j-1}{2}} \leq m$. So $\mcijk{P_n} \geq \floor[\big]{\frac{n-3j-1}{2}}$ which completes the proof. 
        \end{proof}
        
        \begin{theorem}\label{cimkcycles}
            If $C_n$ is a cycle graph on $n$ vertices then for any $0 \leq j \leq \floor[\big]{\frac{n}{2}}$ we have
            \[
                \mcijk{C_n} = \begin{cases}
                    \ciek{C_n} &\text{ if $j = 0$},\\
                    0 &\text{ if $k \geq 3$},\\
                    0 &\text{ if $k=2$ and $j\geq 1$},\\
                    \lambda_{j-1}^k(P_{n-1})  &\text{ if $k=1$ and $j \geq 1$}.\\
                \end{cases}
            \]
        \end{theorem}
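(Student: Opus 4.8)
The plan is to dispatch the four cases in order, reducing each to an earlier result; the only genuine work lies in the final case, which I reduce to the mixed path parameter of Theorem~\ref{cimkpaths}.

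The case $j=0$ is immediate from the definitions, since removing no vertices makes the $j$-mixed edge removal parameter coincide with the ordinary edge removal parameter, giving $\lambda_0^k(C_n) = \ciek{C_n}$. For $j \geq 1$ I would use the observation that deleting any $j \geq 1$ vertices from $C_n$ produces a disjoint union of paths, that is, a linear forest of maximum degree at most $2$. When $k \geq 3$, since $\ci{C_n} \leq 3$ for every $n$, the cycle is already in a failure state and no removals are required, so the parameter is $0$. When $k = 2$ and $j \geq 1$, a linear forest is $2$-edge-colorable, so $\ci{C_n - V'} \leq 2$ for any $V'$ with $|V'| = j \geq 1$; thus no edges need be removed and the parameter is again $0$.

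The main case is $k = 1$ with $j \geq 1$, where I claim $\lambda_j^1(C_n) = \lambda_{j-1}^1(P_{n-1})$, to be established by two inequalities. For the lower bound, given any $V'$ with $|V'| = j \geq 1$, I pick $v \in V'$; since $C_n - v \cong P_{n-1}$, we have $C_n - V' = (C_n - v) - (V' \setminus \{v\})$, a copy of $P_{n-1}$ with $j-1$ vertices deleted, so the number of edges one must further remove to reach chromatic index at most $1$ is at least $\lambda_{j-1}^1(P_{n-1})$; minimizing over $V'$ yields $\lambda_j^1(C_n) \geq \lambda_{j-1}^1(P_{n-1})$. For the upper bound, I fix a vertex $v$, set $P_{n-1} = C_n - v$, take an optimal $(j-1)$-vertex removal set $V''$ for this path, and put $V' = V'' \cup \{v\}$; then $C_n - V' = P_{n-1} - V''$, so $\lambda_j^1(C_n) \leq \lambda_{j-1}^1(P_{n-1})$. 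Theorem~\ref{cimkpaths} then supplies the closed form.

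The step I expect to require the most care is this equality in the $k=1$, $j \geq 1$ case: I must verify the reduction is valid in both directions and that the parameter range transfers correctly. In particular, the hypothesis $0 \leq j \leq \floor{n/2}$ must imply $0 \leq j-1 \leq \floor{(n-1)/2}$ so that Theorem~\ref{cimkpaths} legitimately applies to $P_{n-1}$ with mixed-removal parameter $j-1$; this reduces to the elementary inequality $\floor{n/2} - 1 \leq \floor{(n-1)/2}$, which holds for both parities of $n$. Every remaining step is a direct appeal to $\ci{C_n} \leq 3$, to the $2$-edge-colorability of linear forests, or to the cited theorems.
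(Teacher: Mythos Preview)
Your proposal is correct and follows essentially the same route as the paper: reduce the $j=0$ case to Theorem~\ref{ciekcycles}, dispose of $k\ge 3$ and $k=2$ by the trivial chromatic-index bounds, and in the $k=1$, $j\ge 1$ case delete one vertex to obtain $P_{n-1}$ and invoke Theorem~\ref{cimkpaths}. Your version is in fact more careful than the paper's, which asserts the reduction to $\lambda_{j-1}^1(P_{n-1})$ in one sentence; your two-inequality argument and the verification that $j-1$ lies in the admissible range for Theorem~\ref{cimkpaths} fill in details the paper leaves implicit.
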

        \begin{proof}
            Notice that if $j=0$ then we just need to consider $\ciek{C_n}$ which is Theorem \ref{ciekcycles}.
            If $j\geq 1$ and $k=2$, then we are either in a failure state ($n$ even) or by removing a vertex we will be in a failure state ($n$ odd). 
            If $k=1$ and $j \geq 1$, the first vertex removal will result in $P_{n-1}$ so we can consider mixed removal on $P_{n-1}$ which is Theorem \ref{cimkpaths}.
        \end{proof}

        \begin{theorem}\label{cimkcompletes}
            If $K_n$ is a complete graph on $n$ vertices then for any $0 \leq j \leq \floor[\big]{\frac{n}{2}}$ we have
            \[
                \mcijk{K_n} = \begin{cases}
                0 & \text{ if $k\geq n-j$},\\
                \binom{n-j}{2} - \floor{\frac{n-j}{2}}k & \text{ if $k < n-j$}.
                \end{cases}
            \]
        \end{theorem}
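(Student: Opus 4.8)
The plan is to mirror the argument used for the chromatic number in Theorem \ref{cnmkcompletes}, exploiting the fact that $K_n$ is vertex-transitive so that the initial vertex deletions offer no genuine choice. First I would observe that for any set $V' \subseteq V(K_n)$ with $|V'| = j$, the graph $K_n - V'$ is exactly $K_{n-j}$, independently of which $j$ vertices are chosen. Consequently the mixed parameter collapses to the pure edge-removal parameter on the smaller complete graph: $\mcijk{K_n} = \ciek{K_{n-j}}$.

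With this reduction in hand, both cases follow from Theorem \ref{ciekcompletes} applied to $K_{n-j}$. In the regime $k \geq n-j$, recall that $\ci{K_{n-j}}$ equals $n-j-1$ or $n-j$ according to the parity of $n-j$; in either case $\ci{K_{n-j}} \leq n-j \leq k$, so $K_{n-j}$ is already in a failure state and no edges need be removed, giving $\mcijk{K_n} = 0$. In the regime $k < n-j$, substituting $n-j$ for $n$ in the formula of Theorem \ref{ciekcompletes} yields $\ciek{K_{n-j}} = \binom{n-j}{2} - \floor{\frac{n-j}{2}}k$, which is precisely the claimed value.

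The one point requiring a moment's care is confirming that performing the $j$ vertex deletions first can never be improved by a cleverer choice of which vertices to remove---but this is immediate from the isomorphism $K_n - V' \cong K_{n-j}$, valid for every admissible $V'$. Since there is genuinely only one graph that can result from the vertex-deletion stage, there is no optimization to carry out over $V'$, and the minimum defining $\mcijk{K_n}$ is attained by simply taking a minimum $k$-chromatic index edge removal set for $K_{n-j}$ guaranteed by Theorem \ref{ciekcompletes}. I therefore do not anticipate any real obstacle: the content of the theorem is entirely inherited from the edge-removal result, and the proof is a short reduction of the form $\mcijk{K_n} = \ciek{K_{n-j}}$ followed by an invocation of Theorem \ref{ciekcompletes}.
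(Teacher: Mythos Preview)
Your proposal is correct and follows essentially the same route as the paper: reduce to $K_{n-j}$ via the observation that deleting any $j$ vertices from $K_n$ yields $K_{n-j}$, then invoke Theorem \ref{ciekcompletes}. The paper's handling of the case $k\geq n-j$ splits into the subcases $n\leq k$ and $n-j\leq k<n$, whereas you treat them uniformly, but the content is identical.
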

        \begin{proof}
            If $k\geq n-j$ consider two additional cases of $n-j \leq k < n$ and $n \leq k$. If it is the case that $n \leq k$ then it it follows,
                $$\ci{K_n}\leq n \leq k.$$
            If it is the case that $n-j \leq k < n$, the removal of $j$ vertices will result in $K_{n-j}$ at which,
                $$\ci{K_{n-j}}\leq n-j \leq k. $$
        When $k<n-j$ invoke the appropriate case of Theorem \ref{ciekcompletes}
        \end{proof}

\section{Conclusion}\label{Conclusion}

    Note that if we are trying to use at most $k$ colors, we could put our network into a failure state by removing any vertex (edge) which has a label greater than $k$.  However, this is not always the optimal solution and the numerical values of for the colors are arbitrary. See, for example, consider the graph $G$ in Figure \ref{VertexRemoval} whose vertices are colored using the numbers $\{1, 2, 3\}$.  Notice that chromatic number of the graph is three and each of the colors must be used at least three times to properly color the graph.  However, $\rchi_v^2{G} =2$ by removing the two vertices labeled $a$ and $b$ which leaves four disjoint $P_2$ graphs.

    \begin{figure}[h!]
        \centering
        \includegraphics[width = .3\linewidth]{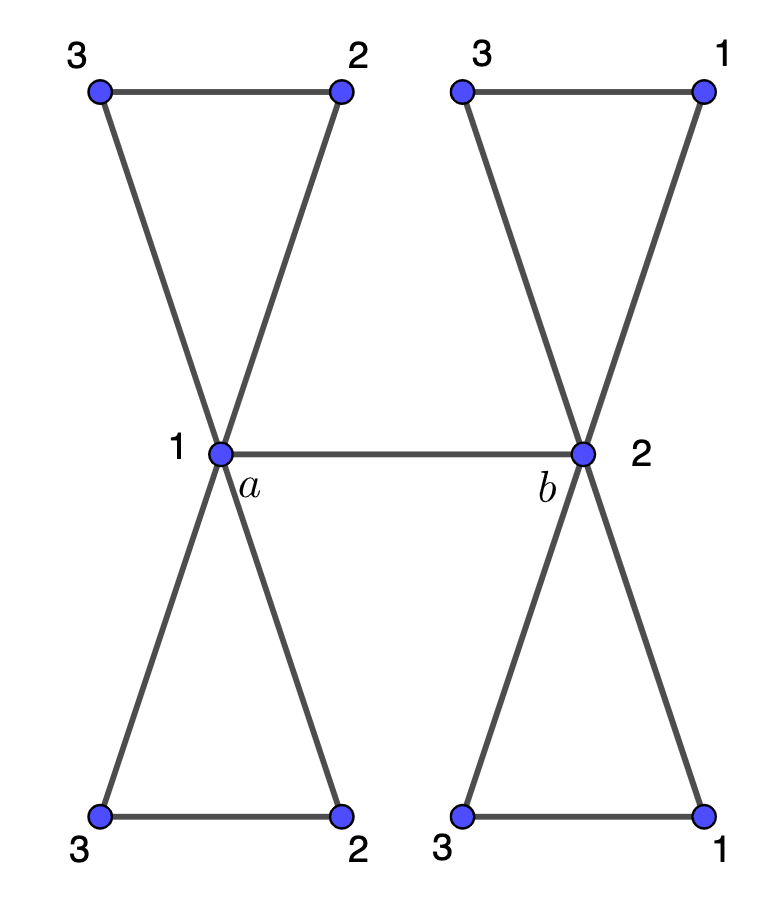}
        \caption{Graph $G$ with $\cn{G}=3$. }
        \label{VertexRemoval}
    \end{figure}

    So far, our attention has been limited to demonstrating these network vulnerability parameters for paths, cycles, and complete graphs. Unsurprisingly finding a solution for paths naturally leads to a solution for cycles.  Complete graphs require finding the particular graph construction given a particular order, $n$, and size, $m$, without going over a specific coloring.  In this way, it is more so a question about extremal graph theory.

    These graph classes are easy enough to analyze since their structures are uniquely determined by some parameter $n$ (mainly number of vertices). There are other such classes such as wheels, stars, and friendship graphs, whose construction is explicitly determined by some parameters which could also be studied. However, consideration could also be given to study the chromatic network vulnerability parameters outlined here on other graph classes whose construction is not as explicit. For example, trees, $r$-regular graphs, and planar graphs should be considered.   There are also other coloring parameters that may be more intuitively associated with reliability that can not be studied.  For example, complete chromatic number with mixed removal is a natural extension.  

    Another interesting extension would be to consider the vulnerability parameter for other classes of graphs, specifically $G(n,m)$ which is the set of graph with order $n$ and size $m$.  Finding the minimum and maximum value of our parameters over the set $G(n,m)$ will tell us the least and most reliable networks we can construct for a given size and order.  Using data generated from some python script \href{https://github.com/rodrigoReyrios/NetworkReliability2}{here}, we have been able to find certain minimums and explore certain maximums.  In some way the question about maximum is more difficult since it requires finding the least optimum removal set over a set of already optimized removals.


\bibliographystyle{plain}
\bibliography{GraphColorV1}


\end{document}